\title{The Categorification of a Symmetric Operad is Independent of Signature}
\author{Miles Gould\\University of Glasgow}
\date{November 28, 2007}
\DeclareSymbolFont{AMSb}{U}{msb}{m}{n}
\DeclareMathSymbol{\natural}{\mathbin}{AMSb}{"4E}
\DeclareMathSymbol{\integer}{\mathbin}{AMSb}{"5A}
\DeclareMathSymbol{\real}{\mathbin}{AMSb}{"52}
\DeclareMathSymbol{\rational}{\mathbin}{AMSb}{"51}
\DeclareMathSymbol{\I}{\mathbin}{AMSb}{"49}
\DeclareMathSymbol{\complex}{\mathbin}{AMSb}{"43}
\DeclareMathSymbol{\bbF}{\mathbin}{AMSb}{"46}
\DeclareMathSymbol{\bbI}{\mathbin}{AMSb}{"49}
\newcommand{\C}{\ensuremath{\mathcal{C}}}
\newcommand{\defterm}[1]{\textbf{#1}}
\newcommand{\Alg}{\textbf{Alg}}
\newcommand{\Algwk}{\textbf{Alg}_{\textrm{wk}}}
\newcommand{\Cat}{\textbf{Cat}}
\newcommand{\CAT}{\textbf{CAT}}
\newcommand{\Set}{\textbf{Set}}
\newcommand{\Digraph}{\textbf{Digraph}}
\newcommand{\Operad}{\textbf{Operad}}
\newcommand{\SymmOperad}{\textbf{$\Sigma$-Operad}}
\newcommand{\CatOperad}{\textbf{Cat-Operad}}
\newcommand{\CatSymmOperad}{\textbf{Cat-$\Sigma$-Operad}}
\newcommand{\VOperad}{\textbf{$\mathcal{V}$-Operad}}
\newcommand{\VSymmOperad}{\textbf{$\mathcal{V}$-$\Sigma$-Operad}}
\newcommand{\End}{\mathop{\mbox{\rm{End}}}}
\theoremstyle{plain}
\newtheorem{theorem}{Theorem}[section]
\newtheorem{lemma}[theorem]{Lemma}
\newtheorem{corollary}[theorem]{Corollary}
\theoremstyle{definition}
\newtheorem{defn}[theorem]{Definition}
\newtheorem{example}[theorem]{Example}
\newcommand{\seq}{_\bullet}
\newcommand{\udot}{^\bullet}
\newcommand{\dseq}{\seq\udot}
\newcommand{\WkP}{{\ensuremath{\mbox{Wk($P$)}}}}
\newcommand{\prodkn}[1]{#1_{k_1} \times \dots \times #1_{k_n}}
\newcommand{\midlabel}[1]{
	        \xymatrixrowsep{4pc}
		\xymatrixcolsep{4pc}
		\xymatrix{ {} \ar @{}[d]^{#1} \\ {} }
}
\newcommand{\midequals}{\midlabel{=}}
\newcommand{\fork}[6]{
	\xymatrix{#1 \ar@<0.5ex>[r]^-{#2} \ar@<-0.5ex>[r]_-{#3} & #4
		\ar[r]^-{#5} & #6
	}
}
\newcommand{\square}[8]{
	\xymatrix{
		#1 \ar[r]^-{#2} \ar[d]_-{#4}
		& #3 \ar[d]^-{#5} \\
		#6 \ar[r]^-{#7}
		& #8
	}
}
\newcommand{\lffar}{\ar@{(>->}} 
\newcommand{\booar}{\ar@{->>}} 
\newcommand{\unar}{\ar@{..>}}  
\newcommand{\trans}{\overline}
\newcommand{\E}{\ensuremath{\mathcal{E}}}
\newcommand{\M}{\ensuremath{\mathcal{M}}}
\newcommand{\Ebar}{\overline \E}
\newcommand{\Mbar}{\overline \M}
\newcommand{\arin}{\mathop{\mbox{in}}}
\newcommand{\orth}{\mathop{\bot}}
\newcommand{\orthset}{^{\bot}}
\newcommand{\smc}{symmetric monoidal category}
\def\nxseq#1{{#1}_n \times {#1}\seq}
\def\sumki{_{\sum{k_i}}}
\def\hatmap{\hat{\phantom{\alpha}}} 
\begin{document}
\maketitle
\begin{abstract}
Given a symmetric operad $P$, and a signature (or generating sequence) $\Phi$
for $P$, we define a notion of the \emph{categorification} (or
\emph{weakening}) of $P$ with respect to $\Phi$.
When $P$ is the symmetric operad whose algebras are commutative monoids, with
the standard signature, we recover the notion of symmetric monoidal
categories.
We then show that this categorification is independent (up to equivalence) of
the choice of signature.
\end{abstract}
\begin{section}{Introduction}
In \cite{hohc}, Leinster showed how to form a categorified version of the
theory of monoids, starting from any signature for that theory, and showed
that for all such signatures, the categorified version was equivalent to
the classical theory of monoidal categories.
One might ask how far this result generalizes: in the present paper, we show
that it can be extended to any theory whose models are algebras for a symmetric
operad.
It is believed that these theories are the ``linear'' ones, i.e. those which
can be presented by means of equations whose variables appear exactly once on
each side, though possibly not in the same order.

Section \ref{sec:background} covers some background material: readers familiar
with the theories of factorization systems and operads can skip most of this,
with the possible exception of Lemmas \ref{lem:fork} and \ref{lem:monad}.
Section \ref{sec:main} covers the definition of the categorification of a linear
theory, and proves that this categorification is independent of the choice of
signature.
Section \ref{sec:symmmoncats} uses this definition to explicitly calculate the
categorification of the theory of commutative monoids with their standard
signature, and shows that this is exactly the classical theory of symmetric
monoidal categories.
Section \ref{sec:psalg} discusses how our result relates to the 2-categorical
notion of pseudo-algebras for a 2-monad, and section \ref{sec:nonlinear}
discusses the difficulties involved in extending our approach to general
finitary theories.

An earlier version of this paper was presented at the 85th Peripatetic Seminar
on Sheaves and Logic in Nice in March 2007.
This version differs mainly in that the main theorem has been expanded to
include symmetric (rather than non-symmetric) operads; in other words, it has
been extended from strongly regular theories to linear theories.
I would like to thank Michael Batanin for suggesting that I work on this
generalization.
The background material is also covered in more detail.
\end{section}
\begin{section}{Background}
\label{sec:background}
We start by recalling some basic notions of operad theory.
For more on operads, see for instance \cite{hohc} Chapter 2.
We borrow the notation $f\seq = (f_1, f_2, \dots, f_n)$ and  $g\dseq = (g_1^1, \dots, g_1^{k_1}, \dots, g_n^1, \dots, g_n^{k_n})$ from chain complexes.
We take the set of natural numbers $\natural$ to include 0.
\begin{defn}
A \defterm{plain operad} $P$ is
\begin{itemize}
\item A sequence of sets $P_0, P_1, P_2, \dots$
\item For all $n, k_1, \dots, k_n \in \natural$, a function $\circ: P_n
\times \prodkn{P} \to P_{\sum k_i}$ 
\item A \defterm{unit element} $1 \in P_1$
\end{itemize}
satisfying the following axioms:
\begin{itemize}
\item \emph{Associativity:} $f \circ (g\seq \circ h\dseq) = (f \circ g\seq)
\circ h\dseq$ wherever this makes sense
\item \emph{Units:} $1 \circ f = f = f \circ (1, \dots, 1)$ for all $f$.
\end{itemize}
\end{defn}

\begin{example}
\label{ex:plainend}
Let $\C$ be a monoidal category and $A$ be an object of $\C$.
Then there is a plain operad $\End(A)$, called the \defterm{endomorphism operad
of $A$} for which $\End(A)_n = \C(A^{\otimes n}, A)$.
Composition in $\End(A)$ is given by composition and tensoring in $\C$.
\end{example}

\begin{defn}
Let $P$ and $Q$ be plain operads.
A \defterm{morphism of plain operads} $f:P \to Q$ is a sequence of functions
$f_n: P_n \to Q_n$ commuting with the composition functions in $P$ and $Q$:
\[
\xymatrixcolsep{6pc}
\xymatrix{
	P_n \times \prod P_{k_i}
		\ar[r]^{f_n \times f_{k_1} \times \dots \times f_{k_n}}
		\ar[d]_{\circ}
	& Q_n \times \prod Q_{k_i} \ar[d]^{\circ} \\
	P_{\sum k_i} \ar[r]^{f_{\sum k_i}}
	& Q_{\sum k_i}
}
\]
\[
f_1(1) = 1
\]
If $X$ is some property of functions (invertibility, say), we say that an
operad morphism $f$ is \defterm{levelwise $X$} if every $f_n$ is $X$.
\end{defn}

\begin{defn}
\xymatrixrowsep{4pc}
\xymatrixcolsep{4pc}
A \defterm{symmetric operad} is a plain operad $P$ together with a left action
of the symmetric group $S_n$ on each $P_n$, which is compatible with the
operadic composition:
\[
\xymatrix{
	P_n \times \prod P_{k_i} \ar[r]^{\sigma \times 1 \times \dots \times 1}
		\ar[d]_{\circ}
	& P_n \times \prod P_{k_i} \ar[d]^{\circ} \\
	P_{\sum k_i} \ar[r]^{\sigma \circ (1, \dots, 1)}
	& P_{\sum k_i}
}
\xymatrix{
}
\]
\end{defn}

\begin{example}
If the monoidal category $\C$ in Example \ref{ex:plainend} is symmetric, then $\End(A)$ acquires the structure of a symmetric operad.
The symmetric groups act by composition with the symmetry map in $\C$.
\end{example}

\begin{defn}
Let $P$ and $Q$ be symmetric operads.
A \defterm{morphism of symmetric operads} $f: P \to Q$ is a morphism of plain
operads which commutes with the actions of the symmetric groups, in the sense
that the diagram
\[
\xymatrix{
P_n \ar[r]^{\sigma \cdot -} \ar[d]_{f_n} & P_n \ar[d]^{f_n} \\
Q_n \ar[r]^{\sigma \cdot -} & Q_n
}
\]
commutes for all $n \in \natural$ and all $\sigma \in S_n$.
\end{defn}
\begin{defn}
Let $P$ be an operad (plain or symmetric).
An \defterm{algebra} for $P$ is a set $A$ and a morphism of (the appropriate
kind of) operads $(\hatmap) : P \to \End(A)$.
So if $p \in P_n$, then $\hat p$ is a function $A^n \to A$.
\end{defn}
This amounts to a function $h_n : P_n \times A^n \to A$ for each $n \in
\natural$, satisfying some obvious axioms.

\begin{defn}
Let $A, B$ be algebras for $P$.
A \defterm{morphism of algebras} $A \to B$ is a function $f: A \to B$ such that
\[
\xymatrix{
P_n \times A^n \ar[r]^{1 \times f^n} \ar[d]_{h_n}
& P_n \times B^n \ar[d]^{h_n} \\
A \ar[r]^f & B
}
\]
commutes for all $n \in \natural$.
\end{defn}

We form categories \Operad\ and $\SymmOperad$ of plain and symmetric operads
and their morphisms.
Given an operad $P$, we form a category $\Alg(P)$ of $P$-algebras and
$P$-algebra morphisms.

The notions of (symmetric) operads and their morphisms can be interpreted in any
closed symmetric monoidal category $\mathcal{V}$ in the obvious way: we call
the resulting category \VOperad\ or \VSymmOperad\ as appropriate.
The algebras for a $\mathcal{V}$-operad are objects of $\mathcal{V}$.
We are particularly interested in the case $\mathcal{V} = \Cat$, with the
monoidal structure given by finite products.
\begin{defn}
Let $f,g: P \to Q$ be morphisms of plain \Cat-operads. 
A \defterm{transformation} $\alpha: f \to g$ is a sequence $(\alpha_n : f_n \to g_n)$ of natural transformations such that 
\begin{eqnarray}
\label{eqn:transcomp}
\xymatrixrowsep{4pc}
\xymatrixcolsep{4pc}
\xymatrix{
	\nxseq P \rtwocell^{\nxseq{f}}_{\nxseq{g}}{*{!(-3,0)\object{\nxseq{\alpha}}}}
		\ar[d]_\circ
	& \nxseq Q \ar[d]^\circ \\
	P\sumki \ar[r]_{g\sumki} & Q\sumki
}
&\midequals
&\xymatrixrowsep{4pc}
\xymatrixcolsep{4pc}
\xymatrix{
	\nxseq P \ar[r]^{\nxseq{f}}
		\ar[d]_\circ
	& \nxseq Q \ar[d]^\circ \\
	P\sumki \rtwocell^{f\sumki}_{g\sumki}{*{!(-2.5,0)\object{\alpha\sumki}}} & Q\sumki
} \\
\label{eqn:transunit}
(\alpha_1)_{1} &= &1_1,
\end{eqnarray}
for all $n, k_1 \dots k_n \in \natural$.

\end{defn}
\begin{defn}
Let $f,g: P \to Q$ be morphisms of symmetric \Cat-operads. 
A \defterm{transformation} $\alpha: f \to g$ is a transformation of morphisms
of plain operads such that
\begin{eqnarray}
\xymatrixrowsep{4pc}
\xymatrixcolsep{4pc}
\label{eqn:transsymm}
\xymatrix{
	P_n \rtwocell^{f_n}_{g_n}{*{!(-1,0)\object{\alpha_n}}}
		\ar[d]_{\sigma\cdot-}
	& Q_n \ar[d]^{\sigma\cdot-} \\
	P_n \ar[r]_{g_n} & Q_n
}
&\midequals
&\xymatrixrowsep{4pc}
\xymatrixcolsep{4pc}
\xymatrix{
	P_n \ar[r]^{f_n}
		\ar[d]_{\sigma\cdot-}
	& Q_n \ar[d]^{\sigma\cdot-} \\
	P_n \rtwocell^{f_n}_{g_n}{*{!(-1,0)\object{\alpha_n}}} & Q_n
}
\end{eqnarray}
for every $n \in \natural$ and every $\sigma \in S_n$.
\end{defn}
We note in passing that these notions are unrelated to the operad
transformations that arise from considering operads as one-object
multicategories.
We refer to the 2-category of plain \Cat-operads, their morphisms and
transformations as \CatOperad, and to the 2-category of symmetric
\Cat-operads, their morphisms and transformations as \CatSymmOperad.

\begin{defn}
Let $A$, $B$ be algebras for some symmetric \Cat-operad $P$.
A \defterm{weak morphism of $P$-algebras} $(F, \xi): A \to B$ is a functor $F:
A \to B$ and a sequence $(\xi_n)$ of natural transformations
\[
\xymatrix{
\xymatrixrowsep{4pc}
P_n \times A^n \ar[r]^{1 \times F^n} \ar[d]_{h_n}
	\drtwocell\omit{*{!(-1,-1.5)\object{\xi_n}}}
& P_n \times B^n \ar[d]^{h'_n} \\
A \ar[r]^F & B
}
\] 
satisfying the equations given in \cite{hohc} Section 3.2 Fig. 3-A, and in
addition some extra diagrams expressing the compatibility with the symmetric
group actions.
We call the category of $P$-algebras and weak morphisms $\Algwk(P)$.
\end{defn}

Recall that a \defterm{fork} is a diagram \[\fork A f g B h C\] such that $hf =
hg$, but for which $h$ is not necessarily a coequalizer for $f, g$.

\begin{lemma}
\label{lem:fork}
In \CatSymmOperad, if $\fork P \alpha \beta Q \gamma R$ is a fork, and $\gamma$
is levelwise full and faithful, then $\alpha \cong \beta$.
\end{lemma}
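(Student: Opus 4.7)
The plan is to construct, for each $n$, a natural isomorphism $\theta_n : \alpha_n \Rightarrow \beta_n$ and then check that the family $(\theta_n)$ satisfies the axioms for a transformation in \CatSymmOperad. The pivotal observation is that the equality $\gamma\alpha = \gamma\beta$ holds on the nose, so $\gamma_n\alpha_n(p) = \gamma_n\beta_n(p)$ as objects of $R_n$ for every $p \in P_n$.

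I would first build the components pointwise. Fullness of $\gamma_n$ yields a morphism $\theta_{n,p} : \alpha_n(p) \to \beta_n(p)$ with $\gamma_n(\theta_{n,p}) = 1$; a second application of fullness gives a candidate inverse $\psi_{n,p}$, and faithfulness of $\gamma_n$ (applied to $\gamma_n(\psi_{n,p} \theta_{n,p}) = 1 = \gamma_n(1_{\alpha_n(p)})$ and similarly for the other composite) forces $\theta_{n,p}$ to be an isomorphism. It is, moreover, the unique morphism $\alpha_n(p) \to \beta_n(p)$ that $\gamma_n$ sends to an identity. Naturality in $p$ is an instance of the same principle: for $f : p \to p'$ in $P_n$, the two composites $\beta_n(f) \circ \theta_{n,p}$ and $\theta_{n,p'} \circ \alpha_n(f)$ are mapped by $\gamma_n$ to the common value $\gamma_n\alpha_n(f) = \gamma_n\beta_n(f)$, so they agree by faithfulness.

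It then remains to verify the three coherence equations defining a transformation of symmetric \Cat-operads: compatibility with operadic composition~\eqref{eqn:transcomp}, the unit condition~\eqref{eqn:transunit}, and compatibility with the symmetric group action~\eqref{eqn:transsymm}. Each of these equates two natural transformations between parallel functors into some $Q_m$, and all three succumb to the same strategy. Applying $\gamma_m$ componentwise turns every occurrence of a $\theta_{k,-}$ into an identity in $R$, while the operad-morphism laws for $\gamma$ (compatibility with $\circ$, with the unit element, and with the $S_n$-actions) make both sides collapse to the identity $2$-cell on the common functor $\gamma_m \alpha_m \circ (\dots) = \gamma_m \beta_m \circ (\dots)$. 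Levelwise faithfulness of $\gamma$ then lifts the equality back from $R_m$ to $Q_m$. In particular, for~\eqref{eqn:transunit}, both $\theta_{1,1_P}$ and $1_{1_Q}$ are morphisms $1_Q \to 1_Q$ sent by $\gamma_1$ to $1$, so they coincide.

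I do not anticipate a genuine obstacle: the levelwise full-and-faithful hypothesis is tailored precisely so that the ``apply $\gamma$, then cancel by faithfulness'' argument discharges every coherence condition uniformly. The only delicate part is the bookkeeping in~\eqref{eqn:transcomp}, where the components of the two $2$-cells in question simultaneously involve data from the levels $P_n, P_{k_1}, \ldots, P_{k_n}$, so one must track carefully which $\gamma_m$ is being used to effect each cancellation.
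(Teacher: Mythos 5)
Your proposal is correct and follows essentially the same route as the paper's proof: use fullness to produce components sent by $\gamma$ to identities, full-and-faithfulness to see they are isomorphisms, and faithfulness to verify naturality and the transformation axioms by applying $\gamma$ and collapsing both sides. You simply spell out in more detail the steps the paper leaves as ``easily seen.''
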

\begin{proof}
We shall construct an invertible \Cat-$\Sigma$-operad transformation
$\eta:\alpha \to \beta$.
We form the $\eta_n$s as follows.
For all $p \in P_n$, $\gamma\alpha(p) = \gamma\beta(p)$.
Since $\gamma$ is levelwise full, there exists an arrow $(\eta_n)_p: \alpha(p)
\to \beta(p)$ such that $\gamma_n(p) = 1_{\gamma\alpha(p)}$.
Since $\gamma$ is levelwise full and faithful, this arrow is an isomorphism.
Each $\eta_n$ is easily seen to be natural.
It remains to show that the collection $(\eta_n)_{n \in \natural}$ forms a
\Cat-$\Sigma$-operad transformation, in other words that the equations
(\ref{eqn:transcomp}), (\ref{eqn:transunit}), and (\ref{eqn:transsymm}) hold.
Since $\gamma$ is levelwise full and faithful, it is enough to show that the
images of both sides under $\gamma$ are equal, and this is trivially true by
definition of $\eta$.
\end{proof}

\begin{defn}
Let $e: a \to b, m: c \to d$ be arrows in a category \C.
We say that $e$ is \defterm{left orthogonal} to $m$, written $e \orth m$, if,
for all arrows $f: a \to c$ and $g: b \to d$, there exists a unique map $t: b
\to c$ such that the following diagram commutes:
\[
\xymatrix{
	a \ar[r]^{\forall f} \ar[d]_e
	& c \ar[d]^m \\
	b \unar[ur]^{\exists! t} \ar[r]_{\forall g}
	& d
}
\]
\end{defn}

\begin{defn} \label{def:FS}
Let \C\ be a category.
A \defterm{factorization system} on \C\ is a pair $(\E, \M)$ of classes of maps
in \C\ such that
\begin{enumerate}
\item \label{axm:factor} for all maps $f$ in \C, there exist $e \in \E$ and $m
\in \M$ such that $f = m \circ e$.
\item \label{axm:iso_closure} \E\ and \M\ contain all isomorphisms, and are
closed under composition with isomorphisms on both sides.
\item \label{axm:orthogonal} $\E \orth \M$, i.e. $e \orth m$ for all $e \in
\E$ and $m \in \M$.
\end{enumerate}
\end{defn}
\begin{example}
\label{ex:epi-mono}
Let $\C = \Set$, $\E$ be the epimorphisms, and $\M$ be the monomorphisms.
Then $(\E,\M)$ is a factorization system.
\end{example}
\begin{example} More generally, let $\C$ be some variety of algebras,
$\E$ be the regular epimorphisms (i.e., the surjections), and $\M$ be the
monomorphisms.
Then $(\E,\M)$ is a factorization system.
\end{example}
Let \Digraph\ be the category of directed graphs and graph maps.
\begin{example}
\label{ex:digraphFS}
Let $\C = \Digraph$, $\E$ be the maps bijective on objects, and \M\ be the full
and faithful maps.
Then $(\E,\M)$ is a factorization system.
\end{example}
In deference to Example \ref{ex:epi-mono}, we shall use arrows like
$\xymatrix{{} \booar[r] & {}}$ to denote members of \E\ in commutative
diagrams, and arrows like $\xymatrix{{} \lffar[r] & {}}$ to denote members of
$\M$, for whatever values of \E\ and \M\ happen to be in force at the time.

We will use without proof the following standard properties of factorization
systems:
\begin{lemma}
\label{lem:stdfact}
Let \C\ be a category, and $(\E, \M)$ be a factorization system on \C.
\begin{enumerate}
\item $\E \cap \M$ is the class of isomorphisms in \C.
\item \label{lem:factuniq} The factorization in (\ref{axm:factor}) is unique up
to unique isomorphism.
\item The factorization in (\ref{axm:factor}) is functorial; in other words, if
the square
\[
\square A f B g h C {f'} D
\]
commutes, and $f = me, f' = m'e'$, then there is a unique morphism $i$ making
\[
\xymatrix{
	A \ar[r]^{e} \ar[d]_g
	& {} \ar[r]^{m} \unar[d]_i
	& B \ar[d]^h \\
	C \ar[r]^{e'}
	& {} \ar[r]^{m'}
	& D
}
\]
commute.
Thus, \E\ and \M\ can be regarded as functors $\mbox{Ar}(\C) \to \mbox{Ar}(\C)$.
\item \E\ and \M\ are closed under composition.
\item $\E\orthset = \M$ and ${}\orthset M = \E$, where $\E\orthset = \{f
\mbox{ in } \C : e \orth f \mbox{ for all } e \in \E\}$ and $\orthset\M = \{f
\mbox{ in } \C : f \orth m \mbox{ for all } m \in \E\}$.
\end{enumerate}
\end{lemma}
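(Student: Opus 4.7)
The plan is to prove the five clauses in turn, with the orthogonality axiom $\E \orth \M$ doing the bulk of the work. For clause 1, one inclusion is axiom (\ref{axm:iso_closure}); conversely, if $f \in \E \cap \M$ then $f \orth f$, and applying this to the lifting square with $f$ on both sides and identities for the top and bottom yields a unique $t$ with $tf = 1$ and $ft = 1$, so $f$ is an isomorphism. For clause 2, suppose $f = me = m'e'$; applying $e \orth m'$ to the square with top $e'$ and bottom $m$ yields a unique $i$ with $ie = e'$ and $m'i = m$, and symmetrically one obtains $i'$ in the other direction; by the uniqueness of fills in each orthogonality square, $ii'$ and $i'i$ are forced to be identities.

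Clause 3 is likewise a direct application of orthogonality: the given commuting square rearranges into a lifting problem with $e \in \E$ on the left, $m' \in \M$ on the right, top arrow $e'g$, and bottom arrow $hm$, whose unique diagonal fill is the required $i$. Functoriality of $\E$ and $\M$ as arrow functors then follows immediately from the uniqueness of this fill, since two candidates built from composites of functorial $i$'s must agree.

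The main obstacle is clause 5, from which I would derive clause 4, rather than proving the clauses in the order listed. For $\E = \orthset\M$, the forward inclusion is axiom (\ref{axm:orthogonal}); for the reverse, take $f \in \orthset\M$ and factor $f = me$ with $e \in \E$ and $m \in \M$. Applying $f \orth m$ to the lifting square with top $e$ and bottom $1$ gives $t$ with $tf = e$ and $mt = 1$. To see $tm = 1$, apply $e \orth m$ to the square with top $e$ and bottom $m$ (which commutes since $me = me$): both $tm$ and the identity are diagonal fills, so they coincide by uniqueness. Hence $m$ is an isomorphism, and $f = me \in \E$ by axiom (\ref{axm:iso_closure}); the argument for $\M = \E\orthset$ is dual.

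Given clause 5, clause 4 is now short: to show $e_2 e_1 \in \E$ for $e_1, e_2 \in \E$, test against an arbitrary $m \in \M$ by solving the lifting problem in two stages — first fill the upper square using $e_1 \orth m$, then fill the lower using $e_2 \orth m$ — with uniqueness propagating through both stages to yield a unique composite fill, whence $e_2 e_1 \in \orthset \M = \E$. Closure of $\M$ under composition is dual. The delicate point throughout is bookkeeping of which square one applies orthogonality to; the substantive work is essentially confined to clause 5, where the interplay between the factorization axiom and orthogonality produces the characterization of $\E$ and $\M$ as mutual orthogonal complements.
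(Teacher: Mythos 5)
Your proof is correct and complete: clause 1 via self-orthogonality of any $f \in \E \cap \M$, clauses 2 and 3 as single applications of the lifting property with uniqueness doing the rest, clause 5 by the retract argument on the factorization of an element of $\orthset\M$, and clause 4 by composing fills in two stages. The paper states Lemma \ref{lem:stdfact} explicitly without proof, as standard properties of factorization systems, so there is no in-paper argument to compare against; your derivation is the standard one, and your choice to prove clause 5 before clause 4 is a legitimate (and common) ordering, though the two-stage lifting argument for clause 4 would also go through directly from axiom (\ref{axm:orthogonal}) without invoking clause 5.
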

We will also use the following fact:
\begin{lemma}
\label{lem:monad}
Let \C\ be a category with a factorization system $(\E, \M)$.
Let $T$ be a monad on \C\ and let $\Ebar = \{ f \arin\ \C : U f \in \E\}$,
$\Mbar = \{f \arin\ \C : U f \in \M\}$ where $U$ is the forgetful functor
$\C^T \to \C$.
Suppose that $T$ preserves \E-arrows.
Then $(\Ebar, \Mbar)$ is a factorization system on $\C^T$.
\end{lemma}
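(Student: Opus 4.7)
The plan is to lift the factorization system from $\C$ to $\C^T$ via the forgetful functor $U$. Given a $T$-algebra morphism $f : (A, a) \to (B, b)$, I would first factor the underlying map $Uf = m \circ e$ in $\C$, with $e : A \to X$ in $\E$ and $m : X \to B$ in $\M$. The algebra-morphism condition on $f$ makes the square with top edge $e \circ a : TA \to X$, left edge $Te : TA \to TX$, bottom edge $b \circ Tm : TX \to B$ and right edge $m$ commute. Since $T$ preserves $\E$-arrows, $Te \in \E$, so applying $Te \orth m$ produces a unique diagonal $x : TX \to X$ satisfying $x \circ Te = e \circ a$ and $m \circ x = b \circ Tm$; these identities precisely say that $e$ and $m$ are algebra maps with respect to $x$.

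The main technical step is then to verify that $x$ really is a $T$-algebra structure on $X$. For the unit law, both $1_X$ and $x \circ \eta_X$ fill the trivially commuting square with parallel sides $e$ and $m$; checking the two triangles uses naturality of $\eta$ together with the unit law for $a$, and $e \orth m$ forces $x \circ \eta_X = 1_X$. Associativity is analogous: both $x \circ Tx$ and $x \circ \mu_X$ serve as diagonals in a common square with parallel sides $T^2 e$ and $m$, where $T^2 e \in \E$ by applying the $\E$-preservation hypothesis twice. Commutativity of that square follows from naturality of $\mu$, the algebra axioms for $a$ and $b$, and the algebra-morphism condition on $f$, and $T^2 e \orth m$ concludes. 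This is where the argument is most susceptible to error, but once the relevant squares are set up, every check reduces to a standard diagram chase.

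For the orthogonality axiom of $(\Ebar, \Mbar)$, a commuting square in $\C^T$ with left edge in $\Ebar$ and right edge in $\Mbar$ descends under $U$ to an orthogonality square in $\C$, producing a unique diagonal $t$. To show that $t$ is itself a $T$-algebra morphism, i.e., $t \circ b = c \circ Tt$, I would exhibit both composites as diagonals of a common commuting square with parallel sides $Te$ and $m$ and conclude by uniqueness from $Te \orth m$. The iso-closure axioms are immediate, since $U$ both preserves and reflects isomorphisms, and the factorization produced above inherits functoriality from that of the underlying factorization in $\C$.
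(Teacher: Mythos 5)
Your argument is correct, and it checks out in every detail I traced: the square with top $e \circ a$, left $Te$, right $m$ and bottom $b \circ Tm$ does commute precisely because $f$ is an algebra map, and the unique diagonal $x$ obtained from $Te \orth m$ satisfies exactly the two identities making $e$ and $m$ into algebra morphisms. The unit and associativity laws for $x$ both follow by exhibiting the two candidate maps as diagonals of a common square (with left edge $e$, respectively $T^2e$) and invoking uniqueness, and the lifting of the orthogonality axiom to $\C^T$ works the same way; the iso-closure axioms are indeed immediate since the forgetful functor creates isomorphisms. Note that the paper itself does not prove this lemma at all --- it declares it ``standard'' and cites Proposition 20.24 of the reference \cite{acc} --- so you have supplied in full the argument the paper leaves to the literature. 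The one point worth making explicit, which you flag but do not spell out, is the commutativity of the associativity square: its two outer composites both equal $f \circ a \circ Ta$, using $b \circ Tb \circ T^2 f = b \circ T(f \circ a) = f \circ a \circ Ta$; with that written down the diagram chase is complete. Your closing remark about functoriality is harmless but not needed, since functoriality is a consequence of the three axioms in Definition \ref{def:FS} rather than one of them.
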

\begin{proof}
Standard.
See \cite{acc}, Proposition 20.24.
\end{proof}
\begin{example} 
Let $(\E, \M)$ be the factorization system on \Digraph\ described in Example
\ref{ex:digraphFS} above, and let $T$ be the free category monad.
Since \Cat\ is monadic over \Digraph, this gives a factorization system $(\Ebar,
\Mbar)$ on \Cat\ where $\Ebar$ is the bijective-on-objects functors, and
$\Mbar$ is the full and faithful ones.
\end{example}
\begin{example} 
\label{ex:catopdFS}
Let $\C = \Cat^\natural$, \E\ be the bijective-on-objects maps, and \M\ be those
that are levelwise full and faithful.
Since \CatSymmOperad\ is monadic over $\Cat^\natural$, this gives a
factorization system $(\Ebar, \Mbar)$ on \CatSymmOperad\ where $\Ebar$ is the
bijective-on-objects
maps, and $\Mbar$ is the levelwise full and faithful ones.
\end{example}

We shall need one final piece of background:
{
\def\SetX{\ensuremath{\Set^X}}
\def\SetXT{\ensuremath{(\SetX)^T}}
\begin{theorem}
If $X$ is a set and $T$ is a monad on \SetX\ then the regular epis in
\SetXT\ are the coordinate-wise surjections.
In other words, the forgetful functor $U:\SetXT \to \SetX$ preserves and
reflects regular epis.
\end{theorem}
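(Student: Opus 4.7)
The plan is to prove the two directions separately, leveraging the fact that $\Set^X$ is a topos satisfying the axiom of choice, so every coordinate-wise surjection splits. The one auxiliary observation I need at the outset is that $T$ preserves coordinate-wise surjections: such a surjection splits, and any functor preserves split epimorphisms.

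For the forward direction (regular epi implies coordinate-wise surjection), I take a regular epi $f: A \to B$ in $(\Set^X)^T$, expressed as the coequalizer of some parallel pair. I factor $Uf = m \circ e$ in $\Set^X$, where $e$ is coordinate-wise surjective and $m$ is coordinate-wise injective. The key step, and the main obstacle, is to show that the intermediate object carries a (necessarily unique) $T$-algebra structure making both $e$ and $m$ into $T$-algebra maps. The structure map is defined by lifting elements along the surjection $Te$ and applying the $A$-action; well-definedness is a diagram chase using the factorization $Tf = Tm \circ Te$ together with $f$ being a $T$-algebra map. Once this is in place, $f$ factors in $(\Set^X)^T$ through $m$, and the universal property of the coequalizer forces $m$ to split, hence to be an isomorphism, so $Uf$ is coordinate-wise surjective.

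For the backward direction (coordinate-wise surjection implies regular epi), suppose $Uf$ is surjective. Since $U$ creates limits, the kernel pair $(p_1, p_2)$ of $f$ in $(\Set^X)^T$ exists and its underlying diagram is the kernel pair of $Uf$ in $\Set^X$; because $Uf$ is surjective, it is already the coequalizer of $(Up_1, Up_2)$ there. For any $T$-algebra map $g: A \to C$ with $g p_1 = g p_2$, this gives a unique underlying factorization $\bar g: B \to C$ in $\Set^X$; I then verify that $\bar g$ is a $T$-algebra map by precomposing the required compatibility square with the surjection $Tf$ and invoking the algebra-map conditions on $f$ and $g$, whereupon epiness of $Tf$ cancels it out.
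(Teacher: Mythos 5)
Your proof is correct, and it is genuinely more informative than what the paper offers: the paper gives no argument at all for this theorem, deferring entirely to Ad\'amek--Herrlich--Strecker (Definition 20.21 and Proposition 20.30 of \cite{acc}). Your argument is the standard direct one and is essentially the content of that reference, specialised to a base where every epi splits. Both halves are sound: the initial observation that $T$ preserves coordinate-wise surjections (because they split in $\Set^X$ and functors preserve split epis) is exactly the leverage needed; the forward direction correctly lifts the image factorization of $Uf$ to a factorization in $(\Set^X)^T$ and then uses the coequalizer's universal property to split the mono part ($m$ is mono in the algebra category since $U$ is faithful, and a split epi mono is iso); the backward direction correctly combines creation of kernel pairs by $U$, the fact that a surjection in $\Set^X$ is the coequalizer of its kernel pair, and cancellation of the epi $TUf$ to see that the induced comparison map is an algebra map. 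The one point you leave implicit is that the induced structure map on the image must itself be verified to satisfy the unit and associativity axioms of a $T$-algebra (uniqueness alone does not supply this); the check is routine, using that $Te$ and $TTe$ are epi, but it should be recorded. With that small addition your write-up is a complete, self-contained proof of a statement the paper only cites.
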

\begin{proof}
See \cite{acc} section 20, in particular Definition 20.21 and Proposition 20.30.
\end{proof}
}

\end{section}
\begin{section}{Categorification}
\label{sec:main}
Throughout, let $P$ be a symmetric \Set-operad.
Symmetric operads are algebras for a straightforward multi-sorted algebraic
theory, so (by standard arguments from universal algebra) there is a monadic
adjunction
\[
\xymatrix{\Set^\natural \ar@<1.2ex>[r]^-F & \SymmOperad \ar@<1.2ex>[l]^-U_-\bot}
\]
\begin{defn}
A \defterm{signature} for $P$ is a pair $(\Phi, \phi)$, where $\Phi \in
\Set^\natural$ and $\phi :F\Phi \to P$ is a regular epi.
Where $\phi$ is obvious, we shall abuse notation and refer to $\Phi$ as a
signature.
\end{defn}
\begin{example}
\label{ex:unbiasedsig}
$(UP, \epsilon)$ is a signature for $P$, where $\epsilon$ is the counit of the
adjunction $F \dashv U$, since
\[
\fork{FUFUP}{\epsilon FU}{FU \epsilon}{FUP} {\epsilon}{P}
\]
is a coequalizer diagram.
\end{example}
\begin{example}
\label{ex:commsig}
The sequence $(\{e\}, \emptyset, \{.\}, \emptyset, \emptyset, \dots)$ is a
signature for the terminal symmetric operad, whose algebras are commutative
monoids.
\end{example}

We will define a categorification of any symmetric operad $P$, dependent on a
signature $(\Phi, \phi)$.
This will be a ``weak'' categorification, in the sense that derived operations
which are equal in $P$ will only be isomorphic in the categorified theory.
We will then show that this is independent of our choice of signature, in
the sense that the symmetric \Cat-operads which arise are equivalent (and thus
have equivalent categories of algebras).
\begin{defn}
Let $(\Phi, \phi)$ be a signature for $P$.

Embed \SymmOperad\ into \CatSymmOperad\ via the (full and faithful) discrete
category functor.
Using the factorization system of Example \ref{ex:catopdFS}, factor $\phi$ as
follows in \CatSymmOperad:
\[
\xymatrix{
	F\Phi \ar[rr]^\phi \booar[dr]_b
	&& P \\
	& Q \lffar[ur]_f
}
\]
where $f$ is full and faithful levelwise, and $b$ is bijective on objects.
Then the \defterm{categorification of $P$ with respect to $(\Phi, \phi)$} is
$Q$.
The uniqueness of $Q$ follows from property (\ref{lem:factuniq}) in Lemma
\ref{lem:stdfact}.
\end{defn}
\begin{example}
\label{ex:smc}
(This example will be explored in greater detail in Section
\ref{sec:symmmoncats}).
Let $P$ be the terminal symmetric operad, whose algebras are commutative
monoids.
Let $\Phi$ be the standard signature for commutative monoids,
i.e. a binary operation and a constant.
Then the categorification $Q$ of $P$ with respect to $\Phi$ is the symmetric
\Cat-operad whose algebras in \Cat\ are symmetric monoidal categories.

Objects of $Q$ are permuted trees of binary and nullary nodes, and there
is a unique arrow between two trees $\tau_1$ and $\tau_2$ if and only if they
evaluate to the same operation in the theory of commutative monoids.
By uniqueness, all diagrams in $Q$ commute.

A $Q$-algebra, therefore, is a category $C$ equipped with a binary functor
$\otimes: \C \times \C \to \C$; an object $I \in \C$; and a natural
transformation from a permuted composite of these functors to another if and
only if they have the same number of arguments. Here a ``permuted composite''
is something like the functor $(A, B, C) \mapsto (A \otimes ((I \otimes C)
\otimes B))$, where variables may be permuted but not repeated or dropped.
Diagrams of these natural transformations will commute if all the morphisms
involved have each variable appearing once on each side; but this is precisely
the coherence theorem for symmetric monoidal categories given in
\cite{catwork} Chapter XI, Theorem 1.1.
In particular, all these natural transformations will be invertible.
\end{example}
\begin{defn}
\label{def:wkp}
The \defterm{unbiased categorification} of $P$ is the categorification arising
from the signature $(UP, \epsilon)$ described in Example \ref{ex:unbiasedsig}.
Call this symmetric \Cat-operad \WkP.
\end{defn}

\begin{defn}
An \defterm{unbiased weak $P$-category} is an algebra for \WkP.
A \defterm{weak $P$-functor} is a weak morphism of \WkP-algebras between two
unbiased weak $P$-categories.
\end{defn}

Recall that \CatSymmOperad\ is a 2-category, so we may talk of two symmetric
\Cat-operads being equivalent.

\begin{theorem}
Every categorification of $P$ is equivalent as a symmetric \Cat-operad to {\rm \WkP}.
\end{theorem}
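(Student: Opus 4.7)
The plan is to build morphisms $g\colon Q \to \WkP$ and $g' \colon \WkP \to Q$ of symmetric \Cat-operads, each compatible with the ``legs'' $f,f'$ down to $P$, and then invoke Lemma~\ref{lem:fork} to conclude that $gg' \cong 1_{\WkP}$ and $g'g \cong 1_Q$, so that $g$ is the required equivalence. Both $g$ and $g'$ will be produced by the orthogonal-lifting property of the factorization system $(\Ebar,\Mbar)$ on \CatSymmOperad\ (Example~\ref{ex:catopdFS}).

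To construct $g$, I transpose $\phi\colon F\Phi \to P$ under the adjunction $F \dashv U$ to obtain $\tilde\phi\colon \Phi \to UP$ in $\Set^\natural$, whose transpose $F\tilde\phi\colon F\Phi \to FUP$ satisfies $\epsilon \circ F\tilde\phi = \phi$. Then the square
\[
\square{F\Phi}{b' \circ F\tilde\phi}{\WkP}{b}{f'}{Q}{f}{P}
\]
commutes, since $f' b' F\tilde\phi = \epsilon F\tilde\phi = \phi = fb$. As $b \in \Ebar$ and $f' \in \Mbar$, orthogonality $b \orth f'$ yields a unique $g\colon Q \to \WkP$ with $gb = b' F\tilde\phi$ and $f' g = f$.

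For the other direction I need a map $\psi\colon FUP \to Q$ in \CatSymmOperad\ with $f \psi = \epsilon$. Since $FUP$ is discrete, a morphism $FUP \to Q$ in \CatSymmOperad\ is the same as a morphism $FUP \to Q^{\mathrm{disc}}$ in \SymmOperad, where $Q^{\mathrm{disc}}$ denotes the symmetric operad of object sets of $Q$; by $F \dashv U$ these correspond in turn to maps $UP \to Q^{\mathrm{disc}}$ in $\Set^\natural$. The condition $f \psi = \epsilon$ then translates to: this $\Set^\natural$-map is a section of the object-function of $f$. Because $b$ is bijective on objects, that object-function factors $\phi$, which is coordinatewise surjective (as a regular epi, by the closing theorem of Section~\ref{sec:background}); so the object-function of $f$ is coordinatewise surjective, and the axiom of choice produces a section levelwise. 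Transposing back gives $\psi$, and orthogonality $b' \orth f$ applied to
\[
\square{FUP}{\psi}{Q}{b'}{f}{\WkP}{f'}{P}
\]
yields a unique $g'\colon \WkP \to Q$ with $g' b' = \psi$ and $f g' = f'$.

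From $f' g = f$ and $f g' = f'$ I get $f'(gg') = fg' = f' = f' \circ 1_\WkP$, so $\fork{\WkP}{gg'}{1_\WkP}{\WkP}{f'}{P}$ is a fork with $f'$ levelwise full and faithful, and Lemma~\ref{lem:fork} delivers $gg' \cong 1_\WkP$. The symmetric argument with $f$ in place of $f'$ gives $g' g \cong 1_Q$, so $g$ is an equivalence in \CatSymmOperad. The main obstacle, I expect, is the construction of $\psi$: it requires a levelwise appeal to the axiom of choice and careful bookkeeping through the adjunction $F \dashv U$ and the discrete-operad embedding to confirm that the chosen object-level section really assembles into a morphism of symmetric \Cat-operads. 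Once $\psi$ is in hand, the rest reduces to the two orthogonal liftings and two applications of the fork lemma.
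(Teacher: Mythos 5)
Your proposal is correct and follows essentially the same route as the paper: transpose $\phi$ along $F \dashv U$ to get the map $Q \to \WkP$, use pointwise surjectivity of the regular epi to choose a levelwise section giving the map back, and apply Lemma~\ref{lem:fork} twice over the levelwise-full-and-faithful legs into $P$. The only (cosmetic) differences are that you invoke orthogonality $\Ebar \orth \Mbar$ directly where the paper cites functoriality of the factorization, and you pick your section on the object-function of $f\colon Q \to P$ rather than on $\phi\colon F\Phi \to P$ (equivalent, since $b$ is bijective on objects).
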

\begin{proof}
Let $Q$ be the categorification of $P$ with respect to a signature  $(\Phi,
\phi)$.
By the triangle identities, we have a commutative square
\[
\xymatrix{
	F\Phi \ar[r]^\phi \ar[d]_{F\trans\phi}
	& P \ar[d]^1 \\
	FUP \ar[r]^\epsilon
	& P
}
\]
By functoriality of the factorization system, this gives rise to a unique map
$\chi: Q \to \WkP$ such that 
\[
\xymatrix{
	F\Phi \booar[r] \ar@/u0.7cm/[rr]^\phi \ar[d]_{F\trans\phi}
	& Q \lffar[r] \unar[d]^\chi
	& P \ar[d]^1 \\
	FUP \booar[r] \ar@/d0.7cm/[rr]_\epsilon
	& \WkP \lffar[r]
	& P
}
\]
commutes.
We wish to find a pseudo-inverse to $\chi$.

Since \SymmOperad\ is monadic over $\Set^\natural$, a regular epi in
\SymmOperad\ is a pointwise surjection
(intuitively, the fact that $\Phi$ is a signature for $P$ means that
$\Phi$ generates $P$, so $\phi_n : (F\Phi)_n \to P_n$ is surjective).
So we may choose a section $\psi_n$ of $\phi_n : (F\Phi)_n \to P_n$ for all $n
\in \natural$.
So we have a morphism $\psi : UP \to UF\Phi$ in $\Set^\natural$ that is a section of $U\phi$.
We wish to show that
\[
\xymatrix{
	FUP \ar[r]^{\epsilon_P} \ar[d]_{\trans\psi}
	& P \ar[d]^1 \\
	F\Phi \ar[r]^\phi
	& P
}
\]
commutes.
Indeed,
\[
\xymatrixrowsep{0.15pc}
\xymatrix{
& FUP \ar[r]^{\bar \psi} & F\Phi \ar[r]^\phi & P \\
\ar@{-}[rrrr] & & & & \\
& UP \ar[r]^\psi & UF\Phi \ar[r]^{U\phi} & UP & =  & UP \ar[r]^1 & UP \\
& & & & \ar@{-}[rrr] & & & ,\\
& & & & & FUP \ar[r]^\epsilon & P
}
\]
as required.
This induces a map
\[
\xymatrix{
	FUP \booar[r] \ar@/u0.7cm/[rr]^\epsilon \ar[d]_{\trans\psi}
	& \WkP \lffar[r] \unar[d]^\omega
	& P \ar[d]^1 \\
	F\Phi \booar[r] \ar@/d0.7cm/[rr]_\phi
	& Q \lffar[r]
	& P
}
\]
We will show that $\omega$ is pseudo-inverse to $\chi$.
Now,
\[
\xymatrix{
	\WkP \lffar[r] \ar[d]_\omega
	& P \ar[d]^1 \\
	Q \lffar[r] \ar[d]_\chi
	& P \ar[d]^1 \\
	\WkP \lffar[r]
	& P
}
\]
commutes.
So
\[
\fork \WkP{1_Q}{\chi\omega}\WkP{}P
\] is a fork.
By Lemma \ref{lem:fork}, $\chi\omega \cong 1_{{\rm Wk}(P)}$, and
similarly $\omega\chi \cong 1_Q$.
So $Q \simeq \WkP$ as a symmetric \Cat-operad, as required.
\end{proof}
\begin{corollary}
Let $Q$ be a categorification of $P$ with respect to some signature $(\Phi,
\phi)$.
Then {\rm $\Algwk(Q) \simeq \Algwk(\WkP)$}.
\end{corollary}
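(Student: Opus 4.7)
The plan is to exhibit $\Algwk(-)$ as a pseudofunctor $\CatSymmOperad^{\mathrm{op}} \to \CAT$, and then to invoke the general fact that pseudofunctors send equivalences to equivalences, applied to the equivalence $Q \simeq \WkP$ built in the preceding theorem.

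First, I would set up the action on 1-cells. Given a morphism $f : P \to Q$ of symmetric \Cat-operads, a $Q$-algebra $(A, (h^Q_n))$ pulls back to a $P$-algebra $f^*A$ with action $h^P_n = h^Q_n \circ (f_n \times 1_{A^n})$; the algebra axioms are immediate from the fact that $f$ strictly commutes with operadic composition, units, and symmetric-group actions. A weak $Q$-morphism $(F, \xi) : A \to B$ pulls back to a weak $P$-morphism $f^*(F,\xi)$ with the same underlying functor $F$ and with coherence 2-cells $\xi_n \cdot (f_n \times 1_{F^n})$ obtained by whiskering with $f_n$. The coherence diagrams from \cite{hohc} Section 3.2 Fig.~3-A, as well as the additional symmetry diagrams, carry across because $f$ is a strict morphism of symmetric operads.

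Next, I would check the action on 2-cells. A transformation $\alpha : f \to g$ in \CatSymmOperad\ ought to induce a natural transformation $\alpha^* : f^* \to g^*$ between the pullback functors $\Algwk(Q) \to \Algwk(P)$. Its component at a $Q$-algebra $A$ is a weak $P$-morphism whose underlying functor is $1_A$ and whose coherence 2-cells are the whiskered composites $h^Q_n \cdot (\alpha_n \times 1_{A^n}) : h^Q_n \circ (f_n \times 1) \Rightarrow h^Q_n \circ (g_n \times 1)$. The coherence axioms for this weak morphism reduce, by a routine diagram chase, to the transformation equations (\ref{eqn:transcomp}), (\ref{eqn:transunit}), and (\ref{eqn:transsymm}), combined with the ordinary naturality of each $\alpha_n$ in its algebra arguments. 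I expect this to be the main obstacle in the corollary: not that any step is hard, but that one must stitch together several independent coherence conditions without mistakes.

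Once pseudofunctoriality is in hand, the conclusion is formal. The preceding theorem provides 1-cells $\chi : Q \to \WkP$ and $\omega : \WkP \to Q$ together with invertible transformations $\chi\omega \cong 1_{\WkP}$ and $\omega\chi \cong 1_Q$. Applying $\Algwk(-)$ yields functors $\chi^*, \omega^* : \Algwk(\WkP) \rightleftarrows \Algwk(Q)$ with natural isomorphisms $\omega^*\chi^* \cong 1_{\Algwk(\WkP)}$ and $\chi^*\omega^* \cong 1_{\Algwk(Q)}$, giving the required equivalence of categories $\Algwk(Q) \simeq \Algwk(\WkP)$.
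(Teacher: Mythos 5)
Your proposal is essentially the paper's own argument: the paper simply cites that $\Algwk$ extends to a 2-functor $\CatSymmOperad \to \CAT^{\textrm{co op}}$ (by adapting \cite{hohc} Theorem 3.2.3) and that 2-functors preserve equivalences, which is exactly the functoriality-plus-formal-consequence structure you spell out. The only cosmetic discrepancy is variance: with the paper's orientation of the 2-cells $\xi_n$, a transformation $\alpha : f \to g$ induces a weak morphism in the direction $g^* \to f^*$ (hence the ``co''), but this does not affect the conclusion.
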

\begin{proof}
By a straightforward extension of the proof in \cite{hohc} Theorem 3.2.3,
$\Algwk$ is a 2-functor $\CatSymmOperad \to \CAT^{\textrm{co op}}$, and thus
preserves equivalences.
\end{proof}

\end{section}
\begin{section}{Symmetric monoidal categories}
\label{sec:symmmoncats}
\def\Csmc{\ensuremath{(\C, \otimes, I, \alpha, \lambda, \rho, \tau)}}
\def\Csmcprime{\ensuremath{(\C, \otimes', I', \alpha', \lambda', \rho', \tau')}}
Let $Q$ be the categorification of the terminal symmetric operad $P$ (whose
algebras are commutative monoids) with respect to the standard signature $\Phi$
given in Example \ref{ex:commsig} above - recall that $\Phi$ has a nullary
element $e$ and a binary element $.$\ , and is empty in all other arities.
Here we prove our assertion in Example \ref{ex:smc} that the algebras for $Q$
are classical symmetric monoidal categories.
More precisely, we show that for a given category \C, the $Q$-algebra
structures on \C\ are in one-to-one correspondence with the \smc\ structures
on \C.

Recall that a \smc\ is a structure \Csmc\, where
\begin{itemize}
\item \C\ is a category,
\item $\otimes: \C \times \C \to \C$,
\item $I \in \C$,
\item $\alpha: -\otimes(-\otimes-) \to (-\otimes-)\otimes-$,
\item $\lambda: I\otimes- \to 1_\C$,
\item $\rho: -\otimes I \to 1_\C$,
\item $\tau: (12)\cdot \otimes \to \otimes$, where $(12)$ is the non-identity
permutation of $\{1,2\}$,
\end{itemize}
satisfying the axioms given in \cite{catwork} Chapter XI.

Let \Csmc\ be a \smc.
We will define a $Q$-algebra structure  on \C, which we shall call $S\Csmc =
(\C, (\hatmap))$.
Since there's a bijective-on-objects map of symmetric \Cat-operads from $F\Phi$
to $Q$, we may define the action of the objects of $Q$ on \C\ by giving a
\Cat-operad map $(\hatmap'): F\Phi \to \End(\C)$.
Equivalently, we may give a map $\Phi \to U \End(\C)$ in $\Set^\natural$, which amounts to a choice of a functor $\C \times \C \to \C$ and an element of \C.
Let these be $\otimes$ and $I$.
To define the actions of the morphisms of $Q$ on \C, we need a natural
transformation $\hat \delta :\hat q_1 \to \hat q_2$ for each arrow $\delta:
q_1 \to q_2 \in Q_n$.
By construction of $Q$, there is such an arrow whenever $q_1$ and $q_2$,
considered as derived operations of the theory of commutative monoids, evaluate
to the same operation.
By standard properties of commutative monoids, this means that we want a
natural transformation $\hat q_1 \to \hat q_2$ iff $q_1$ and
$q_2$ take the same number of arguments.
The coherence theorem for classical symmetric monoidal categories
(\cite{catwork} XI.1) gives us exactly this (via the ``canonical'' maps), and
ensures that the composite and tensor of two such canonical maps are canonical,
i.e. that we have a well-defined map of \Cat-operads $Q \to \End(\C)$.
Hence, $(\C, (\hatmap))$ is a well-defined $Q$-algebra.

Now, let \C\ be a $Q$-algebra, with map $(\hatmap): Q \to \End(\C)$.
We shall construct a \smc\ $R(\C, (\hatmap)) = \Csmc$.
Take
 \begin{itemize}
\item $\otimes = (\hat .)$
\item $ I = \hat e$
\item $ \alpha = \hat \delta_1$, where $\delta_1 : -.(-.-) \to (-.-).-$ in $Q$,
\item $ \lambda = \hat \delta_2$, where $\delta_2 : e.- \to -$,
\item $ \rho = \hat \delta_3$, where $\delta_3 : -.e \to e $,
\item $ \tau = \hat \delta_4 $, where $\delta_4 : (12)\cdot(-.-) \to (-.-)$.
\end{itemize}
Each $\delta_i$ is uniquely defined by its source and target, since each $Q_n$
is a poset.
Because there's at most one map $q_1 \to q_2$ for any $q_1, q_2
\in Q_n$, all diagrams involving these commute.
In particular, the axioms for a \smc\ are satisfied.
So $(\C, \otimes, I, \alpha, \lambda, \rho, \tau)$ is a \smc.

Now, let \Csmc\ be a \smc.
We wish to show that $RS\Csmc = \Csmc$.
Let $RS\Csmc = \Csmcprime$.
Their underlying categories are equal, both being $\C$.
Furthermore,
\[
\begin{array}{rcccl}
\otimes' &= &\hat . &= &\otimes \\
I' &= & \hat e &= & I \\
\alpha' & = &\hat \delta_1 & = & \alpha \mbox{, the unique canonical map of
the correct type}\\
\lambda' & = &\hat \delta_2 & = & \lambda \\
\rho' & = &\hat \delta_3 & = & \rho \\
\tau' & = &\hat \delta_4 & = & \tau
\end{array}
\]

Conversely, let $(\C, (\hatmap))$ be a $Q$-algebra, and let $SR(\C,
(\hatmap)) = (\C', (\hatmap'))$.
Does $(\C, (\hatmap)) = (\C', (\hatmap'))$?
Their underlying categories are the same.
As above, $(\hatmap')$ is determined on objects by the values it takes on $.$
and $e$: these are $\otimes = \hat .$ and $I = \hat e$ respectively.
So $(\hatmap') = (\hatmap)$ on objects.
If $\delta : \tau_1 \to \tau_2$, then $\hat \delta'$ is the unique canonical
map from $\hat \tau_1' \to \hat \tau_2'$, which, by an easy induction, must be
$\hat \delta$.
So $(\hatmap') = (\hatmap)$, and hence $(\C, (\hatmap)) = SR(\C, (\hatmap))$.

\end{section}
\begin{section}{Relation to pseudo-algebras}
\label{sec:psalg}
Another approach to categorification of theories might be to promote the induced
monad $T$ on \Set\ to a 2-monad on \Cat\ via the discrete category functor,
and then to take pseudo-algebras for this 2-monad.
It is well-known to the 2-categorical cognoscenti that this is equivalent to
our construction for strongly-regular theories.
However, for linear theories, this is not the case: for instance, the
pseudo-algebras for the free commutative monoid monad are the strictly
symmetric monoidal categories, i.e. those symmetric monoidal categories
for which the symmetry map $\tau_{AB}$ is an identity for all $A,B$.
\end{section}

\begin{section}{General algebraic theories}
\label{sec:nonlinear}
In light of Lemma \ref{lem:monad}, one might ask the following question.
It is easy to extend the definition of operad (as was done, for instance, in
\cite{tronin}) so that every finitary algebraic theory is represented by one of these more general operads.
The definition of categorification presented in this paper extends
straightforwardly to this situation, to give a categorification of \emph{any}
algebraic theory.
Is this definition sensible?

Unfortunately, it isn't.
The two classes of arrows in our factorization system are, again, the levelwise
bijective-on-objects arrows and the levelwise full-and-faithful arrows; this
corresponds to a categorified theory in which \emph{all} diagrams commute, but
this is not what we want.
For instance, in the theory of commutative monoids, this would imply that the
diagram
\[
\xymatrix{
	A\otimes A \ar@<0.5ex>[r]^1 \ar@<-0.5ex>[r]_{\tau_{A,A}}
	& A\otimes A
}
\]
commutes, where $\tau$ is the symmetry map.
This is not the case for most interesting symmetric monoidal categories.
Moreover, most symmetric monoidal categories are not even equivalent to one
with this property.
\end{section}

\bibliographystyle{plain}
\bibliography{presentation}

\end{document}